\newtheorem{theorem}{Theorem}[section]
\newtheorem{definition}[theorem]{Definition}
\newtheorem{lemma}[theorem]{Lemma}
\newtheorem{proposition}[theorem]{Proposition}
\newtheorem{remark}[theorem]{Remark}
\newcommand{\C}{\mathbb{C}}
\newcommand{\R}{\mathbb{R}}
\newcommand{\Aut}{\mbox{\rm Aut}}
\newcommand{\Aff}{\mbox{\rm Aff}}
\newcommand{\Der}{\mbox{\rm Der}}
\newcommand{\semi}{\rtimes}
\newcommand{\lieg}{\mathfrak{g}}
\newcommand{\lien}{\mathfrak{n}}
\newcommand{\lieh}{\mathfrak{h}}
\newcommand{\lief}{\mathfrak{f}}
\begin{document}

\title[Affine actions on Nilpotent Lie groups]{Affine actions on Nilpotent Lie groups}

\author[D. Burde]{Dietrich Burde}
\author[K. Dekimpe]{Karel Dekimpe}
\author[S. Deschamps]{Sandra Deschamps}
\address{Fakult\"at f\"ur Mathematik\\
Universit\"at Wien\\
  Nordbergstr. 15\\
  1090 Wien \\
  Austria}
\email{dietrich.burde@univie.ac.at}
\address{Katholieke Universiteit Leuven\\
Campus Kortrijk\\
8500 Kortrijk\\
Belgium}
\date{\today}
\email{Karel.Dekimpe@kuleuven-kortrijk.be}
\email{Sandra.Deschamps@kuleuven-kortrijk.be}

\subjclass{22E25, 17B30}
\thanks{The first author thanks the KULeuven Campus Kortrijk  for its hospitality 
and support}
\thanks{The second author expresses his gratitude towards the Erwin Schr\"odinger 
International Institute for Mathematical Physics}
\thanks{Research supported by the Research Programme of the Research 
Foundation-Flanders (FWO): G.0570.06}
\thanks{Research supported by the Research Fund of the Katholieke Universiteit Leuven}

\begin{abstract}
To any connected and simply connected nilpotent Lie group $N$, one can associate its
group of affine transformations ${\rm Aff}(N)$. In this paper, we study simply transitive
actions of a given nilpotent Lie group $G$ on another nilpotent Lie group $N$, via
such affine transformations.

We succeed in translating the existence question of such a simply transitive affine
action to a corresponding question on the Lie algebra level. As an example of the
possible use of this translation, we then consider the case where $\dim(G)=\dim(N)\leq 5$.

Finally, we specialize to the case of abelian simply transitive affine actions on a given
connected and simply connected nilpotent Lie group.
It turns out that such a simply transitive abelian affine action on $N$ corresponds
to a particular Lie compatible bilinear product on the Lie algebra $\lien$ of $N$, which we
call an LR-structure.
\end{abstract}

\maketitle

\section{NIL-affine actions}

In 1977 \cite{miln77-1}, J.~Milnor asked whether or not any connected and simply connected
solvable Lie group $G$ of dimension $n$ admits a representation
$\rho:G\rightarrow \Aff(\R^n)$ into the group of invertible affine mappings,
letting $G$ operate simply transitively on $\R^n$.

For some time, most people were convinced that the answer to
Milnor's question was positive until Y.~Benoist (\cite{beno92-1},
\cite{beno95-1}) proved the existence of a simply connected,
connected nilpotent Lie group $G$ (of dimension 11) not allowing
such a simply transitive affine action. These examples were
generalized to a family of examples by D.~Burde and F.~Grunewald (\cite{bg95-1}),
also in dimension 10 (\cite{burd96-1}).

To be able to construct these counterexamples, both Benoist and Burde--Grunewald used the
fact that the notion of a simply transitive affine action can be translated onto the Lie
algebra level. In fact, if $G$ is a simply connected, connected nilpotent Lie group
with Lie algebra $\lieg$, the existence of a simply transitive affine action of $G$
is equivalent to the existence of a certain Lie algebra representation
\[ \varphi: \lieg \rightarrow \mathfrak{aff}(n)=\R^n \semi \mathfrak{gl}(n,\R).\]

\medskip

As the answer to Milnor's question turned out to be negative, one
tried to find a more general setting, providing a positive answer to
the analogue of Milnor's problem. One such a setting is the setting
of NIL-affine actions. To define this setting, we consider a simply
connected, connected nilpotent Lie group and define the affine group
$\Aff(N)$ as being the group $N\semi \Aut(N)$, which acts on $N$ via
\[ \forall m,n\in N,\;\forall \alpha \in \Aut(N): \;\;^{(m,\alpha)}n=m\cdot \alpha(n).\]
Note that this is really a generalization of the usual affine group
$\Aff(\R^n)=\R^n\semi{\rm GL}(n,\R)$, where ${\rm GL}(n,\R)$ is the
group of continuous automorphisms of the abelian Lie group $\R^n$.
In \cite{deki98-1} it was shown that for any connected and simply
connected solvable Lie group $G$, there exists a connected and
simply connected nilpotent Lie group $N$ and a representation
$\rho:G\rightarrow \Aff(N)$ letting $G$ act simply transitively on
$N$. This shows that in this new setting, any connected and simply
connected solvable Lie group does appear as a simply transitive
group of affine motions of a nilpotent Lie group (referred to as
NIL-affine actions in the sequel).

\medskip

Apart from the existence of such a simply transitive NIL-affine
action for any simply connected solvable Lie group $G$ not much is
known about such actions. As a first approach towards a further
study of this topic, we concentrate in this paper on the situation
where both $G$ and $N$ are nilpotent.

In the following section we show that in this case any simply
transitive action $\rho:G \rightarrow \Aff(N)$ is unipotent. This
result was known in the usual affine case too. Then we obtain
a translation to the Lie algebra level, which is again a very
natural generalization of the known result in the usual affine
situation.

Thereafter we present some examples of simply transitive NIL-affine
actions in low dimensions and finally, we specialize to the
situation where $G$ is abelian.

\section{Nilpotent simply transitive NIL-affine groups are unipotent}

Let $N$ be a connected and simply connected nilpotent Lie group with
Lie algebra $\lien$. It is well known that $N$ has a unique
structure of a real algebraic group (e.g., see \cite{ov93-2}) and
also $\Aut(N)\cong \Aut(\lien)$ carries a natural structure of a
real algebraic group. It follows that we can consider
$\Aff(N)=N\semi \Aut(N)$ as being a real algebraic group. The aim of
this section is to show that any nilpotent simply transitive
subgroup of $\Aff(N)$ is an algebraic subgroup and in fact
unipotent. This generalizes the analogous result for ordinary
nilpotent and simply transitive subgroups of $\Aff(\R^n)$ proved by
J.~Scheuneman in \cite[Theorem 1]{sche75-1}.

Throughout this section we will use $A(G)$ to denote the algebraic
closure of a subgroup $G\subseteq \Aff(N)$ and we will refer to the
unipotent radical of $A(G)$, by writing $U(G)$. Using these
notations, the aim of this section is to prove the following
theorem:

\begin{theorem}\label{uni} Let $N$ be a connected and simply connected nilpotent Lie group and
assume that $G \subset \Aff(N)$ is a nilpotent Lie subgroup acting
simply transitively on $N$, then $G=A(G)=U(G)$.
\end{theorem}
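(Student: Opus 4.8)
The plan is to study the algebraic closure $A(G)$, show that its reductive part is trivial, and then recover $G$ as the resulting unipotent group.

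First I would unwind the hypotheses. We may assume $G$ is connected: its identity component $G^{0}$ has an open orbit, which, $N$ being connected, must be all of $N$, and simple transitivity then forces $G=G^{0}$. Since $G$ acts simply transitively, the orbit map $G\to N$, $g\mapsto {}^{g}e$, is a bijective immersion between manifolds of equal dimension, hence a diffeomorphism; in particular $\dim G=\dim N=:n$ and $G$ is closed in $\Aff(N)$. As $\Aff(N)=N\semi\Aut(N)$ is a real affine algebraic group and $G$ is a nilpotent subgroup, the Zariski closure $A(G)$ is a connected nilpotent algebraic subgroup, and since $G\subseteq A(G)$ it too acts transitively on $N$.

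Next, by the structure theory of nilpotent algebraic groups, $A(G)=U(G)\semi S$ with $U(G)$ the unipotent radical and $S$ a maximal reductive (``toral'') subgroup; nilpotency forces $S$ to act trivially on $U(G)$ by conjugation — otherwise $S$ would act with a nontrivial weight on the Lie algebra of $U(G)$, producing a non-nilpotent ``$ax+b$''-subgroup of $A(G)$ — so $S$ is central and $A(G)=U(G)\times S$. The heart of the matter is to prove $S=\{1\}$, and this is the step I expect to be the main obstacle. In the classical case $N=\R^{n}$ a fixed point of $S$ on $N$ comes for free from the vanishing of $H^{1}$ of a reductive group with coefficients in the \emph{linear} representation $\R^{n}$; here $\Aut(N)$ acts on $N$ only polynomially, so instead I would use conjugacy of maximal reductive subgroups (Mostow): the unipotent radical of $\Aff(N)$ contains $N$, and a maximal reductive subgroup sits inside $\Aut(N)$, so $S$ is conjugate within $\Aff(N)$ into $\Aut(N)$. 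Replacing $G$ by this conjugate — still nilpotent and simply transitive, since conjugation in $\Aff(N)$ merely transports the action by a diffeomorphism of $N$ — we may assume $S\subseteq\Aut(N)$, so that $S$ fixes $e\in N$. One should take a little care that this conjugacy is genuinely available over $\R$.

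Now the fixed-point set $F=\{x\in N:\ {}^{s}x=x\ \text{for all }s\in S\}$ is a closed submanifold containing $e$, and it is $A(G)$-invariant: for $a\in A(G)$, $s\in S$, $x\in F$ one has ${}^{s}({}^{a}x)={}^{a}({}^{a^{-1}sa}x)={}^{a}({}^{s}x)={}^{a}x$, using that $S$ is central in $A(G)$. Since $A(G)$ is transitive and $e\in F$, it follows that $F=A(G)\cdot e=N$, i.e. $S$ acts trivially on $N$; as $\Aff(N)$ acts faithfully this gives $S=\{1\}$, so $A(G)=U(G)$ is unipotent. Finally, $G$ is a connected Lie subgroup of the unipotent real algebraic group $U(G)$, and $\exp$ is a polynomial isomorphism of varieties from the Lie algebra of $U(G)$ onto $U(G)$ carrying the Lie subalgebra of $G$ onto $G$; hence $G$ is Zariski closed in $U(G)$. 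Being also Zariski dense in $A(G)=U(G)$ by construction, we conclude $G=A(G)=U(G)$, which is the assertion. (Here the dimension of $U(G)$ equals $n$, though that is not needed for the conclusion.)
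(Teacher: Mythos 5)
Your proof is correct, and while its overall architecture matches the paper's (trivialize the reductive part of $A(G)$ by a fixed-point argument, then identify $G$ with the unipotent radical), two steps are carried out by genuinely different means. For the fixed point of the central torus $S$, the paper invokes Lemma~\ref{lema} (a real algebraic torus acting algebraically on $\R^n$ has a fixed point, quoted from Benoist--Dekimpe), whereas you produce one by conjugating $S$ into $\Aut(N)$ via Mostow's conjugacy theorem for maximal reductive subgroups; both work, and your worry about the real form of Mostow's theorem is resolvable, since in characteristic zero the conjugating element can be taken in the real points of the unipotent radical. The invariance argument itself --- the fixed-point set of a semisimple element centralizing a transitive subgroup is non-empty and invariant, hence all of $N$, hence the element is trivial by faithfulness --- is essentially the paper's Lemma~\ref{lemc} in a slightly different packaging. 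The more substantial divergence is the final identification $G=U(G)$: the paper first proves that $U(G)$ itself acts simply transitively (Proposition~\ref{prop1}, which requires the dimension bound $\dim U(G)\leq \dim G$ from Raghunathan and the connectedness of stabilizers of polynomial actions, Lemma~\ref{lemb}) and then compares the two simply transitive actions, whereas you argue purely algebraically that a connected Lie subgroup of a unipotent real algebraic group is Zariski closed, so Zariski density of $G$ in $A(G)=U(G)$ forces equality. Your route is shorter and bypasses Lemma~\ref{lemb} entirely; the paper's route has the side benefit of establishing Proposition~\ref{prop1} as a standalone statement, which is reused later in the proof of Theorem~\ref{algversion}.
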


We should mention here that with some effort this theorem can be
reduced from the corresponding theorem in the setting of polynomial
actions as obtained in \cite[Lemma 5]{bd00-1}. For the readers
convenience we will repeat the necessary steps here and adapt them
to our specific situation.

First we recall two basic technical results which will be needed
later on:

\begin{lemma}\label{lema}\cite[Lemma 3]{bd00-1} Let $T$ be a real algebraic torus
acting algebraically on $\R^n$, then the set of fixed points $(\R^n)^T$ is non-empty.
\end{lemma}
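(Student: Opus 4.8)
The plan is to exploit the linear reductivity of the torus to reduce to a \emph{linear} action, and then to use a properness/Euler--characteristic argument in which the fact that we are on affine space $\R^n$ (and not on an arbitrary affine variety) is decisive.

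First I would linearise the action. Since $T$ is a torus we are in the linearly reductive situation (characteristic $0$), so the coordinate ring $\R[\R^n]$ is a locally finite, rational $T$-module. The $n$ coordinate functions therefore span a finite-dimensional $T$-submodule $M\subseteq\R[\R^n]$; choosing a basis $y_1,\dots,y_N$ of $M$ consisting of $T$-weight vectors, the $y_j$ still generate $\R[\R^n]$ as an algebra and hence define a $T$-equivariant closed embedding $\R^n\hookrightarrow W:=\R^N$ on which $T$ now acts \emph{linearly}, diagonally through characters $\chi_1,\dots,\chi_N$. This reduces the claim to: a closed, $T$-stable subvariety $X\subseteq W$ that is isomorphic to affine space $\R^n$ meets the fixed subspace $W^T$, because $(\R^n)^T=X\cap W^T=X^T$. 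Next I would replace the torus by a single generic cocharacter: only finitely many $\chi_j$ occur, so I can pick $\lambda:\mathbb{G}_m\to T$ off the finitely many hyperplanes $\chi_j^{\perp}$ with $\chi_j\neq 0$, whence $W^{\lambda}=W^{T}$ and therefore $X^{\lambda}=X^{T}$. It then suffices to find a single $\lambda$-fixed point in $X$, and the natural tool is the Bialynicki--Birula limit $\lim_{t\to 0}\lambda(t)x$: whenever it exists it is $\lambda$-fixed, hence $T$-fixed, and since $X$ is closed it lies in $X$.

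The heart of the matter — and the step where the affine-space hypothesis is essential — is to guarantee that such a limit exists, i.e. that $X^{T}\neq\emptyset$. That this genuinely uses $X\cong\R^n$ is shown by the closed $T$-stable $\{xy=1\}\cong\mathbb{G}_m$ in $\R^2$, under $t\cdot(x,y)=(tx,t^{-1}y)$, which has \emph{no} fixed point at all; so a closed stable orbit structure by itself is not enough. The cleanest way I know to harness affine space is an Euler-characteristic computation: after complexifying, any $\mathbb{C}^{*}$-action on a variety $Y$ satisfies $\chi_{c}(Y)=\chi_{c}(Y^{\mathbb{C}^{*}})$, since the non-fixed orbits fibre into copies of $\mathbb{C}^{*}$, which have vanishing compactly supported Euler characteristic. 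Applied to $Y=X_{\C}\cong\mathbb{A}^{n}_{\C}$, where $\chi_{c}(\mathbb{A}^{n}_{\C})=1\neq 0$, this forces the complex fixed locus to be non-empty.

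I expect the main obstacle to lie precisely here, in two linked points. First, one must turn this topological nonvanishing into an actual fixed point cleanly; an alternative route is to complete $W$ to $\mathbb{P}(W\oplus\R)$ and apply Borel's fixed point theorem to the closure $\overline{X}$, the difficulty then being to rule out that the only $T$-fixed points of $\overline{X}$ sit on the $T$-stable hyperplane at infinity. Second, one must descend from a complex fixed point to a genuine real one. For the latter I would work in the linear model, where $W^{T}=W_0$ is a real subspace: for a generic \emph{real} cocharacter I would take the one-sided limit $t\to 0^{+}$ of a real point of $X$, which—once one knows the real attracting set is non-empty—lands in the real locus $X\cap W_0=(\R^n)^T$. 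Reconciling the complex nonvanishing with the production of an honest real point is the delicate part that I would expect to consume most of the effort.
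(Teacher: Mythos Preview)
The paper does not prove this lemma at all: it is simply quoted, with citation, as \cite[Lemma 3]{bd00-1} (Benoist--Dekimpe), and is used as a black box in the proof of Proposition~2.4 and Lemma~2.5. So there is no ``paper's own proof'' to compare against; any argument you supply goes strictly beyond what the present paper does.

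That said, your outline is a reasonable route toward an independent proof, and you have correctly isolated the genuine difficulty. The linearisation step and the reduction to a generic one-parameter subgroup are standard and fine. The Euler-characteristic identity $\chi_c(Y)=\chi_c(Y^{\mathbb{C}^*})$ indeed forces the \emph{complex} fixed locus to be nonempty, since $\chi_c(\mathbb{A}^n_{\C})=1$. The weak point, as you yourself flag, is the descent to a real fixed point: the complex fixed locus is the complexification of the real fixed scheme $X^T=X\cap W^T$, but a nonempty real affine variety can perfectly well have empty real locus, so ``$(X_\C)^{T_\C}\neq\emptyset$'' does not by itself yield a point of $(\R^n)^T$. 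Your suggested remedy via the one-sided limit $t\to 0^+$ along a real cocharacter is the right instinct, but as written it presupposes exactly what is at stake, namely that some real orbit has bounded forward trajectory; this needs an argument. One clean way to close the gap in the linear model is to note that a \emph{real} algebraic torus has $T(\R)^0$ isogenous to a product of copies of $\R_{>0}$ and of the compact circle, handle the compact factor by averaging (a compact group acting algebraically, hence smoothly, on $\R^n$ has a fixed point, e.g.\ the barycentre of an orbit after linearisation, or by Smith theory), and then treat the split part by the Bialynicki--Birula/limit argument, where positivity of the real parameter makes the limit genuinely real. Alternatively, one can push the Euler-characteristic argument through over $\R$ using $\Z/2\Z$-coefficients and the fibration by real $\mathbb{G}_m$-orbits. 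Either way, the step you marked as ``the delicate part'' is exactly where additional work is required; the rest of your plan is sound.
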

Any connected and simply connected nilpotent Lie group $N$ can be identified with
its Lie algebra $\lien$ using the exponential map $\exp$. We can therefore speak of a
polynomial map of $N$, by which we will mean that the corresponding map on the Lie algebra
$\lien$ is expressed by polynomials (with respect to coordinates to any given basis of
$\lien$). For instance, it is well known that the multiplication map
$N\times N\rightarrow N: (n_1,n_2) \mapsto n_1n_2$ is polynomial. We use such polynomial
maps in the following lemma.
\begin{lemma}\label{lemb}\cite[Lemma 2 (a)]{bd00-1}  Let $N$ be a connected and simply
connected nilpotent Lie group. Assume that $\theta: N \times \R^n \to \R^n$ is an action
which is polynomial in both variables. Let $v_0$ be a point in $\R^n$. Then the isotropy
group of $v_0$ is a closed connected subgroup of $N$.
\end{lemma}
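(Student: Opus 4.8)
The plan is to treat closedness and connectedness separately, the former being immediate and the latter carrying all the content. Write $\beta\colon N\to\R^n$ for the orbit map $\beta(n)=\theta(n,v_0)$. Since $\theta$ is polynomial, $\beta$ is continuous, and the isotropy group $H=\beta^{-1}(\{v_0\})$ is the preimage of a closed set, hence closed in $N$. By Cartan's closed-subgroup theorem, $H$ is then a Lie subgroup with some Lie algebra $\lieh\subseteq\lien$, and because $N$ is simply connected and nilpotent, the analytic subgroup attached to $\lieh$ is automatically closed, so the identity component is exactly $H_0=\exp(\lieh)$. The whole problem therefore reduces to showing $H\subseteq\exp(\lieh)$, i.e.\ that every element of $H$ already lies in the identity component.

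First I would pass to exponential coordinates, identifying $N$ with $\lien\cong\R^k$ through $\log=\exp^{-1}$, a global diffeomorphism since $N$ is simply connected nilpotent. The decisive feature of these coordinates is that each one-parameter subgroup $t\mapsto\exp(tX)$ becomes the straight line $t\mapsto tX$, so it is \emph{polynomial} (indeed linear) in $t$, and moreover $\exp(X)^m=\exp(mX)$ for every $m\in\Z$. Now fix $h\in H$ and set $X=\log h$. Composing the polynomial curve $t\mapsto\exp(tX)$ with the polynomial map $\theta(\,\cdot\,,v_0)$ shows that
\[ f(t)=\theta\bigl(\exp(tX),v_0\bigr)-v_0 \]
is, in each coordinate, a genuine polynomial in the single real variable $t$, with $f(0)=0$.

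The key step is then a density argument. Because $h\in H$ fixes $v_0$, so does every power $h^m=\exp(mX)$, whence $f(m)=0$ for \emph{all} integers $m$. A one-variable polynomial vanishing at infinitely many points is identically zero, so $f\equiv 0$; that is, $\exp(tX)\in H$ for every $t\in\R$. By the standard description of the Lie algebra of a closed subgroup, this says precisely $X\in\lieh$, and therefore $h=\exp(X)\in\exp(\lieh)=H_0$. As $h\in H$ was arbitrary, we obtain $H\subseteq H_0\subseteq H$, so $H=H_0$ is connected.

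I expect the main obstacle to be the bookkeeping that makes the polynomial-in-$t$ claim rigorous: one must verify carefully that in exponential coordinates of the first kind the curve $t\mapsto\exp(tX)$ is linear and that $h^m=\exp(mX)$, so that $f$ really is a polynomial to which the vanishing argument applies. Once these coordinate facts are in place---they rest only on the simply connected nilpotent structure, where the Baker--Campbell--Hausdorff series terminates and $\exp$ is a polynomial diffeomorphism---everything else is formal. It is worth noting that this argument simultaneously reproves closedness intrinsically and exhibits $H$ as a unipotent subgroup, consistent with regarding $N$ as a real unipotent algebraic group on which $\theta$ acts algebraically.
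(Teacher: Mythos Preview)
The paper does not supply its own proof of this lemma; it is simply quoted from \cite{bd00-1}. So there is nothing to compare against beyond checking your argument on its own terms.

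Your argument is correct. The essential mechanism---that in exponential coordinates of the first kind the curve $t\mapsto\exp(tX)$ is literally $t\mapsto tX$, so that composing with the polynomial action yields a genuine polynomial $f(t)$ in one variable, and that $h\in H$ forces $f$ to vanish on all of $\Z$ and hence identically---is exactly the right idea and is the standard way this kind of statement is proved. The bookkeeping you flag (that $\exp$ is a global polynomial diffeomorphism for simply connected nilpotent $N$, that $\exp(sX)\exp(tX)=\exp((s+t)X)$ so $h^m=\exp(mX)$, and that connected Lie subgroups of such $N$ are automatically closed and equal to $\exp$ of their Lie algebras) is all standard and correctly invoked. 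One stylistic remark: you do not actually need Cartan's theorem or the identification of $\lieh$ as an intermediate step---once you have shown that for every $h\in H$ the entire one-parameter group $\{\exp(t\log h):t\in\R\}$ lies in $H$, connectedness of $H$ follows immediately, since every point of $H$ is joined to the identity by a path inside $H$.
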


Now we adapt \cite[Proposition 1]{bd00-1} to our situation:
\begin{proposition}\label{prop1}
Let $G \subset \Aff(N)$ be a solvable Lie group, acting
simply transitively on $N$. Then the unipotent radical $U(G)$ of
$A(G)$ also acts simply transitively on $N$.
\end{proposition}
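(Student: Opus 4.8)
The plan is to exploit the simple transitivity of $G$ together with the algebraic structure on $\Aff(N)$, and to run an induction argument via a maximal torus. First I would pass to the algebraic closure $A(G)$, which by general theory is a solvable algebraic group; it decomposes (up to conjugacy of its maximal tori) as $A(G) = U(G) \semi T$ for a maximal torus $T$, with $U(G)$ the unipotent radical. Since $G$ acts simply transitively on $N$ and $G \subseteq A(G)$, the larger group $A(G)$ acts transitively on $N$, and the orbit map exhibits $N$ as a homogeneous space $A(G)/A(G)_{n_0}$ for the isotropy group at a chosen basepoint $n_0$. A dimension count will be central: $\dim A(G) = \dim N + \dim A(G)_{n_0}$, and the goal is to show the isotropy group is trivial, forcing $\dim A(G) = \dim N = \dim G$ and hence (since $G$ is a closed subgroup of the same dimension in the connected $A(G)$, using that $G$ is connected) $G = A(G)$, with $T$ trivial so that $A(G) = U(G)$.

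The engine for killing the isotropy is the fixed-point lemma for tori. The composite action of $A(G)$ on $N$, transported via $\exp$ to $\lien \cong \R^n$, is polynomial in the group variable because the multiplication and automorphisms of $N$ are polynomial maps; restricting to a maximal torus $T$ gives an algebraic action of $T$ on $\R^n$, so by Lemma~\ref{lema} the fixed-point set $(\R^n)^T$ is non-empty. I would then choose the basepoint $n_0$ to be such a $T$-fixed point, so that $T$ sits inside the isotropy group $A(G)_{n_0}$. Next I would argue that this isotropy group is in fact unipotent-plus-nothing: $A(G)_{n_0} \cap U(G)$ is the isotropy in $U(G)$, which is a closed connected subgroup by Lemma~\ref{lemb} applied to the (polynomial) restricted action of the nilpotent group $U(G)$; and since $U(G)$ is normal in $A(G)$ with $A(G)/U(G)$ a torus, one gets $A(G)_{n_0} = (A(G)_{n_0}\cap U(G)) \cdot T'$ for some conjugate $T'$ of $T$. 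Because $G$ (hence $A(G)$) acts transitively, $U(G)$ acts transitively on $N$ as well — one shows the $U(G)$-orbit of $n_0$ is open (its dimension equals $\dim N$ since $T$ contributes nothing new to the tangent space at a fixed point) and closed (Lemma~\ref{lemb} gives connectedness, and a standard orbit argument in the nilpotent/unipotent setting gives closedness), hence all of $N$. Therefore $\dim U(G) \geq \dim N$, and combined with $\dim U(G) \leq \dim A(G) = \dim N + \dim A(G)_{n_0}$ together with the reverse inequalities this pins down $\dim A(G)_{n_0} = 0$, i.e. simple transitivity of $U(G)$.

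I would finish by noting that once $U(G)$ acts simply transitively, the torus part is squeezed out: $A(G) = U(G) \semi T$ acts transitively through $U(G)$ alone, so $T \subseteq A(G)_{n_0} = \{1\}$, whence $A(G) = U(G)$; and then $G \subseteq U(G)$ with both connected of dimension $n = \dim N$ forces $G = U(G)$. The main obstacle I anticipate is the closedness of the $U(G)$-orbit (equivalently, that a transitive polynomial action of a simply connected nilpotent group with connected point stabilizers is automatically simply transitive once the dimensions match): openness and connectedness are soft, but ruling out that the orbit is a proper open dense subset requires using properness/closedness features special to unipotent actions — this is exactly where the identification of $N$ with $\lien$ via $\exp$ and the polynomiality of the action (so that orbits are constructible, hence contain a dense open subset of their closure, and a connected unipotent group orbit is closed) must be invoked carefully. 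The secondary technical point is the clean Levi–Mostow-type decomposition $A(G) = U(G)\semi T$ with all maximal tori conjugate, which I would simply cite from the structure theory of solvable algebraic groups.
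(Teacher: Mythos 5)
Your first half is on the right track and matches the paper's strategy: decompose $A(G)=U(G)\semi T$, use Lemma~\ref{lema} to find a $T$-fixed point $n_0$, and deduce transitivity of $U(G)$. But you make this step far harder than it is. Once $n_0$ is $T$-fixed, transitivity of $U(G)$ is immediate from the algebraic identity $N=A(G)\cdot n_0=(U(G)\semi T)\cdot n_0=U(G)\cdot(T\cdot n_0)=U(G)\cdot n_0$; there is no need for the open/closed orbit argument you flag as your ``main obstacle.'' The genuine gap is elsewhere: your dimension count never closes. To get from ``$U(G)$ acts transitively'' to ``$U(G)$ acts with discrete isotropy'' you need an \emph{upper} bound $\dim U(G)\leq \dim N$, and the only inequality you offer, $\dim U(G)\leq \dim A(G)=\dim N+\dim A(G)_{n_0}$, is circular, since $\dim A(G)_{n_0}$ is exactly what you are trying to control. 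The paper supplies the missing ingredient from outside: $\dim U(G)\leq \dim G$ (\cite[Lemma 4.36]{ragh72-1}), combined with $\dim G=\dim N$ from simple transitivity of $G$. With that, discreteness of the $U(G)$-isotropy follows, and Lemma~\ref{lemb} (connectedness of stabilizers) upgrades discrete to trivial — a step you do correctly identify.

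The second problem is that your concluding paragraph proves too much, and what it proves is false at this level of generality. You aim to show $A(G)_{n_0}=\{1\}$, hence $T=\{1\}$ and $G=A(G)=U(G)$. But $n_0$ was chosen precisely so that $T\subseteq A(G)_{n_0}$, and for a solvable non-nilpotent $G$ (e.g.\ the simply transitive group $(x,y)\mapsto(x+a,\,e^{a}y+b)$ on $\R^2$) the torus $T$ is genuinely nontrivial, so $A(G)_{n_0}$ is positive-dimensional and $G\neq U(G)$. The proposition claims only that $U(G)$ acts simply transitively; the identities $G=A(G)=U(G)$ are the content of Theorem~\ref{uni}, and they require $G$ nilpotent, via Lemma~\ref{lemc} and the splitting of a nilpotent algebraic group into unipotent and semisimple parts. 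Conflating the two statements makes your intended endgame unworkable as written.
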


\begin{proof}
As $G$ is a connected, solvable Lie subgroup,
its algebraic closure $A(G)$ is solvable also and Zariski connected.
Therefore it splits as a semi-direct product $A(G)=U(G) \semi T$
where $T$ is a real algebraic torus. From the fact that $G$ acts simply transitively,
we immediately get that $A(G)$ acts transitively on $N$.

By Lemma~\ref{lema} and the fact that $N$ is diffeomorphic to $\R^n$ (for $n=\dim N$),
we know that there exists a point $n_0\in N$ which is fixed under the action of $T$.
It follows that $N= (U(G)\semi T)\cdot n_0= U(G)\cdot n_0$, showing that $U(G)$ acts
transitively on $N$.

By \cite[Lemma 4.36]{ragh72-1}, we know that $\dim U(G) \leq \dim
G$. On the other hand $\dim G=\dim N$, as $G$ is acting simply
transitively on $N$, from which we deduce that $\dim U(G)= \dim G =
\dim N$. It follows that $U(G)$ acts with discrete isotropy groups
(because stabilizers have dimension $0$). Lemma~\ref{lemb} then
implies that $U(G)$ acts with trivial isotropy groups, allowing us
to conclude that $U(G)$ acts simply transitively on $N$.
\end{proof}

\medskip

The last step we need before we can prove Theorem~\ref{uni} is the following
(compare with \cite[Lemma 4]{bd00-1})

\begin{lemma}\label{lemc}
Let $G \subset \Aff(N)$ be a solvable Lie group acting simply
transitively on $N $ and let $U(G)$ be the unipotent radical of
$A(G)$. Then the centralizer of $U(G)$ in $A(G)$ coincides with the
center of $U(G)$.
\end{lemma}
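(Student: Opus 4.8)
The plan is to show the two inclusions: the center $Z(U(G))$ is obviously contained in the centralizer $C_{A(G)}(U(G))$, so the real content is the reverse inclusion, namely that any element of $A(G)$ commuting with all of $U(G)$ must in fact lie inside $U(G)$ (and hence in its center). Write $A(G) = U(G) \semi T$ as in the proof of Proposition~\ref{prop1}, where $T$ is a real algebraic torus, and recall from that proposition that $U(G)$ already acts simply transitively on $N$. Pick an element $c \in C_{A(G)}(U(G))$ and decompose it as $c = u\cdot t$ with $u \in U(G)$ and $t \in T$; since $u$ commutes with everything $c$ does, we may as well assume $c = t \in T$, and the goal becomes: a torus element centralizing $U(G)$ is trivial.

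The key geometric idea is to use a fixed point. First I would invoke Lemma~\ref{lema} together with the diffeomorphism $N \cong \R^n$ to produce a point $n_0 \in N$ fixed by the torus $T$; in particular $t\cdot n_0 = n_0$. Now I want to exploit that $t$ commutes with $U(G)$ and that $U(G)$ is simply transitive. For any $u \in U(G)$, consider the point $u\cdot n_0$. Then
\[
t\cdot(u\cdot n_0) = (tu)\cdot n_0 = (ut)\cdot n_0 = u\cdot(t\cdot n_0) = u\cdot n_0,
\]
so $t$ fixes every point in the orbit $U(G)\cdot n_0$. But $U(G)$ acts transitively on $N$, so $U(G)\cdot n_0 = N$, i.e. $t$ fixes all of $N$. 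Since the action of $A(G) \subseteq \Aff(N)$ on $N$ is faithful (an affine transformation is determined by its effect on $N$, the identity component having trivial kernel because $\Aff(N) = N \semi \Aut(N)$ acts effectively), this forces $t = 1$. Hence $c = u \in U(G)$, and then $c \in C_{A(G)}(U(G)) \cap U(G) = Z(U(G))$.

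The main obstacle I anticipate is the faithfulness/effectiveness point: strictly speaking one must check that an element of $A(G)$ fixing every point of $N$ is the identity. This is immediate from the definition $\Aff(N) = N \semi \Aut(N)$ acting by ${}^{(m,\alpha)}n = m\cdot\alpha(n)$ — taking $n = e$ gives $m = e$, and then $\alpha(n) = n$ for all $n$ forces $\alpha = \mathrm{id}$ — so this is really just a remark, but it is the step where one uses the specific form of the NIL-affine group rather than abstract nonsense. A secondary (minor) point is justifying the reduction $c = t$: one should note that $C_{A(G)}(U(G))$ contains $Z(U(G))$ and is stable under multiplication by such elements, so decomposing $c = ut$ and absorbing $u$ is legitimate precisely because $u$ itself centralizes $U(G)$ exactly when $t$ does. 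With these two observations in place the argument is short and essentially forced by Lemma~\ref{lema} and Proposition~\ref{prop1}.
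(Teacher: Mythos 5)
Your overall strategy is the same as the paper's: produce a fixed point for a torus (semisimple) element via Lemma~\ref{lema}, propagate it over all of $N$ using commutation with the simply transitive group $U(G)$ from Proposition~\ref{prop1}, and conclude by effectiveness of the $\Aff(N)$-action. The second half of your argument --- the computation $t\cdot(u\cdot n_0)=u\cdot n_0$, the conclusion that $(N)^{t}=N$, and the explicit check that $\Aff(N)=N\semi\Aut(N)$ acts effectively --- is correct, and in fact makes explicit a step the paper leaves implicit (the paper only says the nonempty $U(G)$-invariant fixed set ``contradicts transitivity,'' which likewise rests on effectiveness).

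The genuine gap is the reduction ``we may as well assume $c=t\in T$.'' Writing $c=ut$ with $u\in U(G)$ and $t\in T$, the hypothesis that $c$ centralizes $U(G)$ gives, for every $v\in U(G)$, only $tvt^{-1}=u^{-1}vu$; it does \emph{not} give $tvt^{-1}=v$. Your own justification (``$u$ centralizes $U(G)$ exactly when $t$ does'') is a tautology: it is the equivalence $u\in C\iff t\in C$ given $c\in C$, and establishes neither membership. To close the gap you need one of the following. (a) The paper's route: the centralizer $C$ is an \emph{algebraic} subgroup of $A(G)$, hence closed under Jordan decomposition, so the semisimple part $c_s$ of $c$ lies in $C$; being a semisimple element of the solvable algebraic group $U(G)\semi T$, it lies in (a conjugate of) the torus, and your fixed-point argument then applies verbatim to $c_s$, giving $c_s=1$ and $c=c_u\in U(G)$. (b) Alternatively, observe that the identity $\mathrm{Int}(t)|_{U(G)}=\mathrm{Int}(u^{-1})|_{U(G)}$ equates a semisimple automorphism of $U(G)$ (conjugation by a torus element) with a unipotent one (an inner automorphism of a unipotent group), which forces both to be trivial, so $t\in C$ after all. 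Note also that $c=ut$ is the semidirect-product factorization, not the Jordan decomposition, so $t$ need not equal $c_s$; this is precisely why the reduction cannot simply be asserted. With either repair, the rest of your proof stands.
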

\begin{proof} Using Proposition~\ref{prop1} we know that  $U(G)$
acts simply transitively on $N$, and $A(G)$ splits as a semi-direct product $A(G)=U(G) \semi T$
where $T$ is a real algebraic torus.

The centralizer $C$ of $U(G)$ in $A(G)$ is also an algebraic group
and therefore, for every element $c\in C$, the unipotent part $c_u$
and the semisimple part $c_s$ of $c$ are also in $C$. Now assume
that $C$ does not belong completely to $U(G)$, then there is an
element $c$ of $C$ with a nontrivial semisimple part $c_s \neq 1$,
and $c_s$ also belongs to $C$. This semisimple part $c_s$, belongs
to the algebraic torus $T$. So, as in lemma \ref{lema}, we can
conclude that $(N)^{c_s}$ is non-empty.

However, as $c_s$ centralizes $U(G)$, the set $(N)^{c_s}$ is
$U(G)$-invariant. This contradicts the transitivity of $U(G)$ on
$N$. It follows that $C$ is included in $U(G)$.
\end{proof}

\medskip

We are now ready to prove the main result of this section.
\medskip

{\bf Proof of theorem \ref{uni}:}
As $G$ is nilpotent, its algebraic closure $A(G)$ is also nilpotent.
A nilpotent algebraic group splits as a direct product $A(G)=U(G)
\times S(G)$, where $S(G)$ denotes the set of semi-simple elements
of $A(G)$. In this case $S(G)$ centralizes $U(G)$ and by
Lemma~\ref{lemc}, we can conclude that $S(G)$ has to be trivial, so
$A(G)=U(G)$. From the fact that both $G$ and $A(G)=U(G)$ are acting
simply transitively on $N$ (Proposition~\ref{prop1}), we can
conclude that $G=A(G)=U(G)$.
\qed

\section{Translation to the Lie algebra level}

In this section we will show that we can completely translate the notion of a simply
transitive NIL-affine action of a nilpotent Lie group $G$ to a notion on the 
Lie algebra level.

As before, let $G$ and $N$ be connected, simply connected Lie
groups. We will use $\lieg$ to denote the Lie algebra of $G$ and
$\lien$ to denote the Lie algebra of $N$. Recall that the Lie
algebra corresponding to the semi-direct product $\Aff(N)=N\semi
\Aut(N) $ is equal to the semi-direct product $\lien \semi {\rm
Der}(\lien)$, where Der$(\lien)$ is the set of all derivations of
$\lien$. This semi-direct product $\lien \semi {\rm Der}(\lien)$ is
a Lie algebra via
\[ \left[ (X,D), (X',D') \right]=\left([X,X']+ DX'-D'X, [D,D']\right).\]

Assume that $\rho:G \rightarrow \Aff(N)$ is a representation of Lie groups. Then there
exists a unique homomorphism  $d\rho$ (differential of $\rho$)
of their respective Lie algebras $\lieg$ and $\lien \rtimes {\rm
Der}(\lien)$ making the following diagram commutative:
\[
\xymatrix@C=3.5cm{ G \ar[r]^\rho & \Aff(N)\\
\lieg\ar[u]^{\exp} \ar[r]_{d\rho} & \lien \rtimes {\rm Der}(\lien)
\ar[u]_{\exp} }
\]
Conversely, any Lie algebra homomorphism $d\rho:\lieg \rightarrow\lien \rtimes {\rm
Der}(\lien)$   can be seen as the differential of a Lie group homomorphism
$\rho:G \rightarrow \Aff(N)$.

\medskip

Let us use the following notation for $d\rho$:
\[d\rho:\lieg \rightarrow \lien\rtimes {\rm Der}(\lien): X \mapsto d\rho(X)=(t_X, D_X).\]

As $d\rho$ is a Lie algebra homomorphism,
$\mathcal{D}:\lieg \rightarrow {\rm Der}(\lien):X\mapsto D_X$ is also a Lie
algebra homomorphism.

\begin{theorem}\label{algversion}
Let $G$ and $N$ be simply connected, connected nilpotent Lie groups. Let $\rho:G\rightarrow
\Aff(N)$ be a representation. Using the notations introduced above, we have the following:
\begin{center}
$\rho:G\rightarrow \Aff(N)$ induces a
simply transitive NIL-affine action of $G$ on $N$\\
$\Updownarrow$\\
\parbox{8cm}{%
\begin{enumerate}
\item $t:\lieg \rightarrow \lien: X \mapsto t_X$ is a bijection.
\item For any $X\in \lieg$, $D_X$ is nilpotent.
\end{enumerate}}
\end{center}
\end{theorem}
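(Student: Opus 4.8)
The plan is to prove the two implications separately. Throughout write $\theta\colon G\times N\to N$ for the action $\theta(g,n)={}^{\rho(g)}n$ and $\theta_e\colon G\to N$, $g\mapsto{}^{\rho(g)}e$, for its orbit map at the identity $e\in N$. A direct computation in $\Aff(N)=N\semi\Aut(N)$ shows that for $X\in\lieg$ the point ${}^{\rho(\exp X)}e$ has $N$-component $\exp_N(t_X+(\text{higher order}))$ while its $\Aut(N)$-component fixes $e$; hence the differential of $\theta_e$ at the identity is precisely the linear map $t\colon X\mapsto t_X$, and the Lie algebra of the stabilizer $\mathrm{Stab}_G(e)$ equals $\ker t$.

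For the implication ``$\Downarrow$'', assume the action is simply transitive. Since it is free, $\rho$ is injective, so $\rho(G)$ is a nilpotent Lie subgroup of $\Aff(N)$ acting simply transitively, and Theorem~\ref{uni} gives that $\rho(G)$ is unipotent. The stabilizer of $e$ is trivial, so $\ker t=0$; since $\dim\lieg=\dim\lien$ by simple transitivity, $t$ is a bijection, which is~(1). For~(2), apply the algebraic homomorphism $p\colon\Aff(N)\to\Aut(N)$ to the unipotent element $\rho(\exp tX)$: its image $\exp(tD_X)$ is a unipotent element of $\Aut(\lien)\subseteq\mathrm{GL}(\lien)$ for every $t\in\R$, and comparing lowest-order terms in $(\exp(tD_X)-\mathrm{id})^{\dim\lien}=0$, using $\exp(tD_X)-\mathrm{id}=tD_X+O(t^2)$, forces $D_X^{\dim\lien}=0$, so $D_X$ is nilpotent.

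For the implication ``$\Uparrow$'', assume~(1) and~(2). The first step is to show $\rho(G)$ is unipotent. By~(2) the set $\mathcal D(\lieg)=\{D_X\mid X\in\lieg\}$ is a nilpotent Lie subalgebra of $\Der(\lien)$ all of whose elements are nilpotent operators; by Engel's theorem it is simultaneously strictly triangularisable, so the connected subgroup of $\Aut(\lien)$ with Lie algebra $\mathcal D(\lieg)$ --- which is exactly $p(\rho(G))$ --- is unipotent. As the kernel $N$ of $p$ is unipotent, Jordan decomposition forces the semisimple part of each $\rho(g)$ to lie in $N$ and hence to be trivial, so every $\rho(g)$ is unipotent; since $\rho(G)$ is connected this makes its Zariski closure unipotent, whence (using that $\exp$ is an isomorphism of varieties on a unipotent group) $\rho(G)$ is itself Zariski closed and unipotent. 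In particular $\rho$ is a homomorphism of unipotent Lie groups, so it is polynomial, and therefore $\theta$ is polynomial in both variables (also using that the multiplication of $N$ and the action of $\Aut(N)$ on $N\cong\lien$ are polynomial). Now Lemma~\ref{lemb} applies and gives that $\mathrm{Stab}_G(e)$ is a closed \emph{connected} subgroup of $G$; its Lie algebra $\ker t$ is zero by~(1), so $\mathrm{Stab}_G(e)=\{1\}$. Hence $\theta_e$ is injective, and the orbit $\rho(G)\cdot e$ is an $n$-dimensional, thus open, submanifold of $N\cong\R^n$.

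The principal remaining point --- and, I expect, the main obstacle --- is to turn openness of the orbit into surjectivity of $\theta_e$, since an injective polynomial local diffeomorphism of $\R^n$ need not be onto. Here one uses that orbits of a unipotent algebraic group acting algebraically on an affine variety are Zariski closed; then $\rho(G)\cdot e$ is also closed, and being a nonempty clopen subset of the connected space $N$ it equals $N$. Thus $\theta_e$ is a bijection, and combining this with $\mathrm{Stab}_G(e)=\{1\}$ (conjugating an arbitrary base point to $e$) shows that every stabilizer is trivial, so the action is simply transitive.
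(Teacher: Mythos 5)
Your proof is correct in both directions, but it takes a genuinely different route from the paper's, which proceeds by induction on $\dim G$: using the unipotent filtration $\lien=\lien_1\supset\lien_2\supset\cdots$ the paper peels off a codimension-one ideal $\lieg_2=\ker(\lieg\to\lien/\lien_2)$, applies the induction hypothesis to $G_2$ acting on $N_2$, and then treats the remaining one-dimensional direction by hand; the same induction is run in both directions. Your downward direction repackages the same key ingredient (Theorem~\ref{uni}) more cleanly, via the observation that $t$ is the differential of the orbit map at the identity and $\ker t$ is the Lie algebra of the stabilizer. Your upward direction is where the approaches really diverge: you first establish unipotence of $\rho(G)$ from conditions (1)--(2) via Engel plus Jordan decomposition (the paper never needs this full statement in that direction --- it only triangularises $\mathcal{D}(\lieg)$), then get trivial stabilizers from Lemma~\ref{lemb} together with openness of the orbit from the dimension count, and finally obtain surjectivity from the closed-orbit theorem for unipotent groups (Kostant--Rosenlicht). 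That last step is the one genuine external dependency and should be cited precisely: the closed-orbit theorem is usually proved over algebraically closed fields, and the statement you actually need --- that the orbit of the \emph{real points} of a unipotent group is closed in the Euclidean topology --- requires in addition either the vanishing of $H^1(\R,U)$ for unipotent $U$ in characteristic zero or a direct argument (see e.g.\ Birkes, \emph{Orbits of linear algebraic groups}). You correctly identified surjectivity as the crux; it is exactly the ``completeness'' issue that in the classical case $N=\R^n$ separates left-symmetric structures from complete ones, and the paper's induction is in effect an elementary, self-contained proof of precisely that point. In short, your argument buys brevity and conceptual clarity at the price of importing a nontrivial theorem, while the paper's buys self-containedness at the price of a bookkeeping-heavy induction.
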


This theorem is known to hold in the usual affine case, i.e., with $N=\R^n$ (see
for example \cite[section 3]{fg83-1}, \cite{kim86-1} and
\cite[p.~100]{dm95-1}), and so we really obtain a very natural generalization.

\begin{proof}
We first prove the direction from the Lie group level (top statement)
to the Lie algebra level (bottom statement). We will proceed by
induction on the dimension $n$ of $G$, where the situation $n=1$ is trivial.\\
So, we assume that $\rho:G\rightarrow \Aff(N)$ induces a simply
transitive NIL-affine action of $G$ on $N$. As $\Aff(N)=N\semi
\Aut(N)$, we can decompose $\rho$ in two component maps
$tr:G\rightarrow N$ and $lin:G\rightarrow \Aut(N)$ with
$\rho(g)=(tr(g), lin(g))$.

The fact that the action is simply transitive, is equivalent to
$tr$ being a bijection between $G$ and $N$, so also $\dim N =  \dim G = n$.

By Proposition~\ref{prop1}, we know that $lin(G)$ is a Lie subgroup of $\Aut(N)$
consisting of unipotent elements. We can therefore choose a  basis
$A_1,\ldots,A_n$ of $\lien$, such that the following two properties hold:

\begin{enumerate}
\item For any $i\in \{1,2,\ldots,n\}$: \ $\lien_i= \langle
A_i,A_{i+1},\ldots, A_n \rangle$ is an ideal of $\lien$. The
corresponding Lie subgroup $N_i=\exp(\lien_i)$ of $N$ is then a normal subgroup
of $N$.\\
If we set $a_j^{x_j}= \exp(x_j A_j)$, for any $x_j \in \R$, then every
element of $N_i$ can be written uniquely in the form $a_i^{x_i} \ldots
a_n^{x_n}$, with $ x_i,\ldots, x_n \in \R$.\\[0.1cm]
\item For any $g \in G$: $lin(g)(N_i) = N_i$, and $lin(g)$ induces the identity on
each quotient $N_i/N_{i+1}$ (with $N_{n+1}=1$).
Seen as an element of $\Aut(\lien)$, the matrix $lin(g)$ is lower triangular
unipotent w.r.t.\ the basis $A_1,A_2\ldots, A_n$.
\end{enumerate}

Inspired by this last property, we introduce $U(N)$ to denote the Lie subgroup of
$\Aut(N)$, consisting of all $\mu\in \Aut(N)$,
for which $\mu(N_i)= N_i$ and for which $\mu$ induces the identity
on each quotient $N_i/N_{i+1}$. So ${lin}(G)\subseteq U(N)$.

$U(N)$ consists of unipotent elements, so denote by
$O(\lien)=\log(U(N))$ the corresponding Lie subalgebra of
$\Der(\lien)$. Every element in $O(\lien)$ is therefore a derivation $D$
for which $D(\lien_i) \subseteq \lien_{i+1}$ for all $i$.

As $N_2\semi U(N)$ is a closed normal subgroup of the Lie group $N \semi
U(N)$, we can construct a Lie group homomorphism $\psi:G \rightarrow
N /N_2\cong \R$ as the composition
\[ G \stackrel{\rho}{\longrightarrow} N\semi U(N)
\longrightarrow \frac{N\semi U(N)}{N_2\semi U(N)}\cong
\frac{N}{N_2}.\] Of course, $\psi:G \rightarrow N/N_2$ can also be
written as the composition of maps
\[ G \stackrel{tr}{\longrightarrow} N \longrightarrow \frac{N}{N_2}.\]
It follows that $\psi $ is onto and that the kernel  of $\psi$ is a
Lie subgroup  $G_2$ of $G$ of codimension 1. We will now use the
induction hypothesis for this Lie subgroup $G_2$. We denote the Lie
subalgebra of $\lieg$ corresponding to $G_2$ by $\lieg_2$. \\[0.2cm]
Note that there is a natural Lie group homomorphism $N_2\semi U(N)
\rightarrow N_2 \semi \Aut(N_2)$ obtained by restricting
automorphisms $\mu \in U(N)$ to automorphisms of $N_2$. As a
conclusion, we find the following diagram  in which each square is
commutative:\\[0.3cm]
\begin{equation}\label{bigdiagram}
\xymatrix{ \lieg_2\ar[rd]_\exp\ar[rr]\ar[dd]_{d\rho'} &  & \lieg\ar[rd]^\exp\ar[dd]^(0.3){d\rho} & \\
& G_2\ar[dd]^(0.3){\rho'}\ar[rr] & & G\ar[dd]^\rho \\
\lien_2 \semi O(\lien)\ar[rd]_\exp\ar[rr]\ar[dd] & & \lien\semi \Der(\lien)\ar[rd]^\exp & \\
& N_2 \semi U(N)\ar[dd]\ar[rr] & & N\semi \Aut(N) \\
\lien_2\semi \Der(\lien_2)\ar[rd]_\exp & & & \\
& N_2\semi \Aut(N_2) & & \\
}
\end{equation}
\medskip
In the above diagram, $\rho'$ (resp.\ $d\rho'$) is obtained from
$\rho$ (resp.\ $d\rho$) by restricting both the domain and the
codomain. Now, the composite map
\[ G_2 \stackrel{\rho'}{\longrightarrow} N_2\semi U(N) \longrightarrow
N_2 \semi \Aut(N_2) \] satisfies the condition from the statement of
the theorem. Therefore, we can use the induction hypothesis to
conclude that the composite map
\[ \lieg_2 \stackrel{d\rho'}{\longrightarrow} \lien_2\semi O(\lien) \rightarrow \lien_2\semi \Der(\lien_2),  \]
with
\[ X_2 \mapsto (t_{X_2},D_{X_2}) \mapsto (t_{X_2},D_{X_2}^{'}),\]
(where $D_{X_2}^{'}$ denotes the restriction of $D_{X_2}$ to $\lien_2$) satisfies the following conditions:
\begin{enumerate}
\item $t:\lieg_2 \rightarrow \lien_2 \subseteq \lien: X_2 \mapsto t_{X_2}$ is a bijection.
\item For any $X_2 \in \lieg_2$, $D_{X_2}^{'}$ is nilpotent (evidently, as $D_{X_2} \in O(\lien)$).
\end{enumerate}

\medskip

Now, consider the map $d\rho:\lieg \rightarrow \lien \semi
\Der(\lien)$.
As $\rho(G) \subseteq N \semi U(N)$, we get that $d \rho(\lieg )
\subseteq \lien \semi O(\lien)$, so for any $X\in \lieg$, $D_X$ is
evidently nilpotent.

\medskip

Now, fix an element $b_1\in G$, for which $\psi(b_1)=a_1 \cdot N_2$.
Take $B_1=\log(b_1)$, and we already know that $A_1=\log(a_1)$. Then
we have that \[ \lien= \langle A_1 \rangle + \lien_2 \mbox{ and }
\lieg= \langle B_1 \rangle + \lieg_2.\]

To show that $t: \lieg \to \lien$ is bijective, take an arbitrary
element of $\lien$.  Such an element is of the form
$s A_1 + m_2$, for some $s \in \R$ and some $m_2\in \lien_2$.

From the fact that $\psi(b_1) = a_1 \cdot N_2$, one deduces that:
\[ d\rho(B_1) = (A_1 +  m_2^{'}, D),\mbox{ for some } m_2^{'} \in \lien_2\mbox{ and }D\in O(\lien).\]
It follows that
\[t_{ s B_1}= s A_1 + s m_2^{'} \]

As $t: \lieg_2 \to \lien_2 \subseteq \lien$ is a bijection, and $m_2
- s m_2^{'} \in \lien_2$, there exists a $g_2\in \lieg_2$ such that
\[ t_{g_2}= m_2 - s m_2^{'}. \]
So $t$ maps $s B_1 + g_2$ to $s A_1 + m_2$, showing that $t$ is
surjective. In an analogous way, one shows that $t$ is injective,
from which we conclude that $t: \lieg \to \lien$ is a bijection.

\bigskip

We now prove the direction from the Lie algebra level (bottom statement)
to the Lie group level (top statement). We will proceed by
induction on the dimension $n$ of $\lieg$, where the situation $n=1$ is trivial.\\
So, we assume that

\begin{enumerate}
\item $t:\lieg \rightarrow \lien: X \mapsto t_X$ is a bijection. So
$\dim  \lien = \dim \lieg = n$.
\item For any $X\in \lieg$, $D_X$ is nilpotent.
\end{enumerate}

As $\mathcal{D}(\lieg)$ is a Lie subalgebra of $\Der(\lien)$ consisting of nilpotent elements,
we can choose a vector space basis
$A_1,\ldots, A_n$ of $\lien$, such that

\begin{enumerate}
\item for any $i\in \{1,2,\ldots,n\}$: \
$\lien_i= \langle A_i,A_{i+1},\ldots, A_n \rangle$ is an ideal of $\lien$, and
\item for any $X\in \lien$: $D_X(\lien_i) \subseteq
\lien_{i+1}$, where $\lien_{n+1}=0$ (i.e., the matrix $D_X$ is strictly lower triangular
w.r.t.\ the basis $A_1,A_2\ldots, A_n$).
\end{enumerate}

Now we use $O(\lien)$ to denote the Lie subalgebra of $\Der(\lien)$ consisting of all derivations
$D\in \Der(\lien)$ for which $D(\lien_i) \subseteq
\lien_{i+1}$ for all $i$. So $\mathcal{D}(\lieg)\subseteq O(\lien)$.

As the $\lien_i$ are ideals in the Lie algebra $\lien$, the corresponding Lie subgroups
$N_i=\exp(\lien_i)$ are closed normal subgroups of $N$. They determine a filtration of closed subgroups
\[ N=N_1\supset N_2 \supset \cdots \supset N_n\supset N_{n+1}=1 \]
where $N_i/N_{i+1}\cong \R$.

Let $\exp(O(\lien))=U(N)$ be the Lie subgroup of $\Aut(N)$  corresponding to $O(\lien)$, then for any
element $\mu\in U(N)$, we have that $\mu(N_i)= N_i$ and $\mu$ induces the identity on each quotient
$N_i/N_{i+1}$.

\medskip

As $\lien_2\semi O(\lien)$ is obviously an ideal of the Lie algebra
$\lien\semi O(\lien)$, we can construct a Lie algebra homomorphism
$\psi:\lieg \rightarrow \lien/\lien_2\cong \R$ as the composition
\[ \lieg \stackrel{d\rho}{\longrightarrow} \lien\semi O(\lien)
\longrightarrow \frac{\lien\semi O(\lien)}{\lien_2\semi O(\lien)}\cong \frac{\lien}{\lien_2}.\]
Of course, $\psi:\lieg \rightarrow \lien/\lien_2$ can also be written as the composition of linear maps
\[ \lieg \stackrel{t}{\longrightarrow}\lien \longrightarrow \frac{\lien}{\lien_2}.\]
It follows that $\psi $ is onto and that the kernel  of $\psi$ is an ideal $\lieg_2$ of $\lieg$ of
codimension 1. We will now use the induction hypothesis for this ideal $\lieg_2$. We denote the
subgroup of $G$ corresponding to $\lieg_2$ by $G_2$.

Note that there is a natural Lie algebra homomorphism $\lien_2\semi O(\lien) \rightarrow
\lien_2 \semi \Der(\lien_2)$ obtained by restricting derivations $D\in O(\lien)$ to derivations of
$\lien_2$.  As a conclusion, we again obtain diagram (\ref{bigdiagram}) in which each square is commutative.

\medskip

Now, the composite map
\[ \lieg_2 \stackrel{d\rho'}{\longrightarrow} \lien_2\semi O(\lien) \longrightarrow
\lien_2 \semi \Der(\lien_2) \]
satisfies conditions 1.\ and 2.\ from the statement of the theorem. Therefore, we can use the
induction hypothesis to conclude that the composite map
\[ G_2 \stackrel{\rho'}{\longrightarrow} N_2\semi U(N) \rightarrow N_2\semi \Aut(N_2) \]
determines a simply transitive action of $G_2$ on $N_2$. It follows that the map $\rho:G\rightarrow
N\semi \Aut(N)$ describes an action of $G$ on $N$, for which the subgroup $G_2$ acts on $N$ in such a
way that the subgroup $N_2$ is a single orbit for the $G_2$-action and $G_2$ acts simply transitively
on $N_2$.

\medskip

Now, fix an element $B_1\in \lieg$, for which $\psi(B_1)=A_1+\lien_2$. Then we use the notation
$b_1^t=\exp(t B_1)\in G$ (resp.\ $a_1^t=\exp(t A_1)\in N$) for all $t\in \R$. Now, we can consider
the two 1-parameter subgroups
\[ B=\{ b_1^t\;|\; t\in \R\}\cong \R \mbox{ and } A=\{ a_1^t\;|\; t\in \R\}\cong \R\]
of $G$ and $N$ respectively, for which we have that
\[ G=G_2 \cdot B \mbox{ and } N=A\cdot N_2.\]
To show that $G$ acts transitively on $N$, it is enough to show that
any element of $N$ is in the same orbit as the identity. An
arbitrary element of $N$ is of the form $a_1^t\cdot n_2$, for some
$t\in \R$ and some $n_2\in N_2$. From the fact that $\psi(B_1) =
A_1+\lien_2$, one deduces that for a given $t\in \R$:
\[ \rho(b_1^{-t}) = (a_1^{-t}\cdot  n_2', \mu),\mbox{ for some } n_2'\in N_2\mbox{ and }\mu\in U(N).\]
It follows that
\[ ^{\rho(b_1^{-t})} a_1^t\cdot n_2= n_2''\mbox{ for some } n_2'' \in N_2.\]
As $G_2$ acts simply transitively on $N_2$, there exists a $g_2\in
G_2$ such that
\[ ^{\rho(g_2)}n_2''=1 \]
and so $g_2\cdot b_1^{-t}$ maps $a_1^t\cdot n_2$ to the identity.
This shows that the action of $G$ is transitive. Analogously, one
shows that $g_2\cdot b_1^{-t}$ is the unique element mapping
$a_1^t\cdot n_2$ to the identity, from which we conclude that the
action is simply transitive.
\end{proof}

\section{The situation in low dimensions}

Now that we have obtained a description of simply transitive NIL-affine actions
on the Lie algebra level, we are able to study the existence of such actions
in low dimensions. As a first result, we obtain the following proposition.

\begin{proposition}
Let G and N be connected, simply connected nilpotent Lie groups of
dimension $n$ with $1 \leq n \leq 5$. Then there exists a
representation $\rho:G\rightarrow \Aff(N)$ which induces a simply
transitive NIL-affine action of $G$ on $N$.
\end{proposition}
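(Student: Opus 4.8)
The plan is to pass, via Theorem~\ref{algversion}, from the group statement to an equivalent Lie algebra statement, and then to settle the latter using the (finite) classification of nilpotent Lie algebras of dimension at most $5$.

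First, by Theorem~\ref{algversion} a representation $\rho:G\to\Aff(N)$ inducing a simply transitive NIL-affine action is the same thing as a Lie algebra homomorphism $d\rho:\lieg\to\lien\semi\Der(\lien)$, $X\mapsto(t_X,D_X)$, such that $t:\lieg\to\lien$, $X\mapsto t_X$, is a linear bijection and every $D_X$ is a nilpotent derivation of $\lien$. Spelling out the homomorphism condition, this amounts to
\[ t_{[X,Y]}=[t_X,t_Y]+D_Xt_Y-D_Yt_X,\qquad D_{[X,Y]}=[D_X,D_Y] \]
for all $X,Y\in\lieg$, together with bijectivity of $t$ and nilpotency of the $D_X$. (Transporting the bracket of $\lieg$ to $\lien$ along $t$, this is the same as giving a second Lie bracket $\{\cdot,\cdot\}$ on the vector space $\lien$, with $(\lien,\{\cdot,\cdot\})\cong\lieg$, together with a linear map $a\mapsto\lambda_a$ into the nilpotent derivations of $(\lien,[\cdot,\cdot])$ obeying $\{a,b\}=[a,b]+\lambda_ab-\lambda_ba$ and $\lambda_{\{a,b\}}=[\lambda_a,\lambda_b]$.) Hence it suffices to exhibit such data for every ordered pair $(\lieg,\lien)$ of nilpotent Lie algebras of a common dimension $n\le5$; since each dimension $\le5$ contains only finitely many isomorphism types, this is a finite check.

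Next I would dispose of the easy pairs. If $\lieg\cong\lien$ there is nothing to do: $N$ acts simply transitively on itself by left translations, i.e.\ via $\rho(n)=(n,\id)\in N\semi\Aut(N)$. If $\lien=\R^n$ is abelian, then $\Der(\lien)=\mathfrak{gl}(n,\R)$ and the two displayed conditions are exactly those of a complete left-symmetric (equivalently, complete affine) structure on $\lieg$, i.e.\ of an ordinary simply transitive affine action of $G$ on $\R^n$; it is classical that every nilpotent Lie algebra of dimension at most $5$ carries one, counterexamples first occurring in dimensions $10$ and $11$ (\cite{bg95-1,burd96-1,beno92-1,beno95-1}). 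Dually, the pairs with $\lieg$ abelian are precisely the situation of the final section: an abelian simply transitive NIL-affine action on $N$ corresponds to an LR-structure on $\lien$, so for these pairs the required data can be read off from the LR-structures on the low-dimensional nilpotent Lie algebras produced there. This leaves only the finitely many pairs in which $\lieg$ and $\lien$ are both non-abelian and non-isomorphic — none in dimension $\le3$, a couple in dimension $4$, and a short list in dimension $5$ coming from the nine nilpotent types. For these I would argue by induction on $n$ along the mechanism used in the proof of Theorem~\ref{algversion}: choose a codimension-one ideal $\lieg_2\triangleleft\lieg$ and a codimension-one ideal $\lien_2\triangleleft\lien$ for which the induction hypothesis supplies a simply transitive action of $\lieg_2$ on $\lien_2$, and extend it over one further generator $B_1$ of $\lieg$ by a suitable choice of $(t_{B_1},D_{B_1})$; the filtration and nilpotency conditions then hold automatically, and only the cocycle identity $t_{[B_1,X_2]}=[t_{B_1},t_{X_2}]+D_{B_1}t_{X_2}-D_{X_2}t_{B_1}$ and the bijectivity of $t$ remain to be arranged (and when no compatible choice of ideals makes the induction close, one writes $(t,\mathcal{D})$ down by hand and checks the identities directly).

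The main obstacle is this last step. Each individual pair is only an elementary linear-algebra computation, but the work of the proposition is to organize the construction uniformly — making compatible choices of the two codimension-one ideals so that the induction closes for every pair with $\dim\le5$, rather than merely treating a single illustrative example, and dealing explicitly with the exceptional pairs — whereas the reduction in the first step and the abelian cases in the second are essentially bookkeeping together with appeals to known results.
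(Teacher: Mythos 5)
Your reduction is exactly the paper's: apply Theorem~\ref{algversion} and then verify the resulting Lie-algebra statement pair by pair over the finite classification in each dimension $\le 5$, with the pairs $\lieg\cong\lien$ handled by left translations. The difficulty is that everything after the reduction is left undone, and the verification \emph{is} the proof here. Two of your dispositions also lean on material that does not exist where you point to it. For the pairs with $\lieg$ abelian and $\lien$ non-abelian you propose to ``read off'' the data from LR-structures ``produced'' in the final section; that section proves only the equivalence with LR-structures and constructs none (the constructions are deferred to \cite{bdd07-2}). Since Proposition~\ref{counterexample} shows that precisely this case fails already in dimension $6$, the existence of complete LR-structures on every non-abelian nilpotent $\lien$ of dimension $\le 5$ is a genuine claim that must be checked, not a bookkeeping step. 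For the remaining non-abelian, non-isomorphic pairs your extension-by-induction mechanism is only a heuristic: you concede that the codimension-one ideals may not be choosable compatibly and that the fallback is to write $(t,\mathcal{D})$ down by hand --- which you do not do for any pair.

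The paper's proof, by contrast, actually exhibits the homomorphisms: explicit $D_X$ for both ordered pairs $(\R^3,\lieh_3)$ and $(\lieh_3,\R^3)$, a complete table of linear parts covering all nine ordered pairs in dimension $4$, and a worked example in dimension $5$ (with the remaining list of $81$ cases stated to be available from the authors). Your appeal to the classical existence of complete left-symmetric structures for the pairs with $\lien=\R^n$ is legitimate and would let you skip one column of that table, but as written the proposal establishes the proposition only for $\lieg\cong\lien$ and for $\lien$ abelian; the rest is a plan, not a proof.
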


\begin{proof} Let $\lieg$ and $\lien$ be the corresponding Lie
algebras.  By theorem \ref{algversion} we know that it is enough to
show that there exists a Lie algebra homomorphism $d\rho:\lieg
\rightarrow \lien\rtimes {\rm Der}(\lien): X \mapsto d\rho(X)=(t_X,
D_X)$ , for which the translational part $t:\lieg \rightarrow \lien
: X \mapsto t_X$ is a bijection and all $D_X$ are nilpotent.

For each Lie algebra we will fix a basis $X_1,\ldots, X_n$
of the underlying vector space and denote the coordinate
of an element $\sum_{i=1}^{n} x_i X_i $ by the column vector
$( x_1, x_2 ,\ldots,x_n)^T$. We will sometimes represent a
derivation $D_X$ by its matrix with respect to the given basis $X_1,\ldots,X_n$.

Let us first remark that if $\lien=\lieg$, with $\dim(\lieg)=n$, then
there exists a trivial Lie algebra morphism: \[d\rho_{triv}:\lieg
\rightarrow \lieg\rtimes {\rm Der}(\lieg): X \mapsto
d\rho_{triv}(X)=(X, D_0(n)),\] where $D_0(n)$ is the trivial
nilpotent derivation which maps every element of $\lieg$ to $0$.
Of course, this representation corresponds to the simply transitive
action of $G$ on itself, using left translations.

So up to dimension $3$ we only have to search for suitable Lie
algebra morphisms between $\R^3$ and $\lieh_3$, the 3-dimensional Heisenberg Lie algebra.
Let $\R^3$ be the Lie algebra with  basis vectors $X_1,X_2,X_3$ and all Lie brackets
equal to zero. Let $\lieh_3$ be the Lie algebra with
basis vectors $X_1,X_2,X_3$ and non-zero brackets
$[X_1,X_2]=X_3$. Then one can choose the following suitable Lie
algebra morphisms (expressed in coordinates):

\[ d\rho:\R^3
\rightarrow \lieh_3\rtimes {\rm Der}(\lieh_3): X=( x_1, x_2 ,x_3)^T
\mapsto \left(( x_1, x_2 ,x_3)^T , D_X= \left( \!
\begin{array}{ccc}  0 & 0 & 0
\\ 0 &0&0\\\frac{ x_2}{2} & \frac{-x_1}{2}  &0\end{array} \! \right) \right),\]

and

 \[ d\rho:\lieh_3
\rightarrow \R^3\rtimes {\rm Der}(\R^3): X=( x_1, x_2 ,x_3)^T
\mapsto \left(( x_1, x_2 ,x_3)^T, D_X= \left( \!
\begin{array}{ccc}  0 & 0 & 0
\\ 0 &0&0\\\frac{ -x_2}{2} & \frac{x_1}{2}  &0\end{array} \! \right)   \right).\]

In dimension $4$ there are $3$ different nilpotent Lie algebras over $\R$.
Denoting the basis by $X_1,X_2,X_3,X_4$, let $\R^4$ be the Lie algebra
with all Lie brackets equal to zero, $\lieh_3
\oplus \R$ be the Lie algebra with non-zero bracket $[X_1,X_2]=X_3$, and
$\lief_4$ be the filiform nilpotent Lie algebra with non-zero
brackets $[X_1,X_2]=X_3, [X_1,X_3]=X_4$.

We can find suitable Lie algebra morphisms $d\rho:\lieg
\rightarrow \lien\rtimes {\rm Der}(\lien): X=( x_1, x_2 ,x_3,x_4)^T
\mapsto d\rho(X)=(t_X, D_X)$ for all possible combinations of
$\lieg$ and $\lien$ in dimension $4$.
In every case we can choose $t_X=( x_1, x_2 ,x_3,x_4)^T$.
The following table shows possible choices of linear parts:

\medskip

 \begin{tabular}{|c|c|c|c|} \hline
 & ${\rm Der}(\R^4)$ & ${\rm Der}(\lieh_3 \oplus \R)$ & ${\rm Der}(\lief_4)$\\
\hline

$\R^4$ &  $ D_0(4)$ &    $\left(\! \begin{array}{cccc} 0 & 0 & 0&0
\\ 0 &0&0&0 \\ \frac{x_2}{2 }&\frac{-x_1}{2}&0&0 \\  0 & 0
& 0 & 0
\end{array} \! \right)  $  &  $ \left(\! \begin{array}{cccc} 0 & 0 & 0&0
\\ 0 &0&0&0 \\ 0&-x_1&0&0 \\  0 & 0
& -x_1 & 0
\end{array} \! \right)  $    \\ \hline

$\lieh_3 \oplus \R$ &   $ \left(\! \begin{array}{cccc} 0 & 0 & 0&0
\\ 0 &0&0&0 \\ \frac{-x_2}{2 }&\frac{x_1}{2}&0&0 \\  0 & 0
& 0 & 0
\end{array} \! \right)  $  &  $ D_0(4) $  & $\left(\! \begin{array}{cccc} 0 & 0 & 0&0
\\ -x_1 &0&0&0 \\ 0&0&0&0 \\  x_3 & x_2
& 0 & 0
\end{array} \! \right)  $ \\ \hline

$\lief_4$ &  $ \left(\! \begin{array}{cccc} 0 & 0 & 0&0
\\ 0 &0&0&0 \\ 0&x_1&0&0 \\  0 & 0
& x_1 & 0
\end{array} \! \right) $     &  $ \left( \! \begin{array}{cccc}  0 & 0 &
0&0
\\ x_1 &0&0&0 \\ x_1+x_2&x_1&0&0 \\  3 x_1-x_3 & x_2
& 0 & 0  \end{array} \! \right)$ &  $ D_0(4) $  \\ \hline

\end{tabular}

\vspace*{0.5cm}

Analogously one can show that the theorem also holds in dimension
$5$. Up to isomorphism there are $9$ nilpotent Lie algebras over
$\R$, so one has to find in total $81$ suitable Lie algebra
morphisms satisfying the properties of Theorem~\ref{algversion}.
A full list of those Lie algebra morphisms is available from the authors.
As an example we treat here one case. Let $\lieh_{3} \oplus \R^2$
be the Lie algebra with basis vectors $X_1,\ldots, X_5$ and non-zero
brackets $[X_1,X_2]=X_3 $ and let $\lieg_{5,6}$ (using the notation
of \cite{magn95-1}) be the Lie algebra with basis vectors
$X_1,\ldots, X_5$ and non-zero brackets $[X_1,X_2]=X_3; \
[X_1,X_3]=X_4 ; \ [X_1,X_4]=X_5 ; \ [X_2,X_3]=X_5$.
Then a suitable Lie algebra morphism is the following:
\[ d\rho:\lieh_{3} \oplus \R^2 \rightarrow \lieg_{5,6} \rtimes {\rm Der}(\lieg_{5,6}):
X=( x_1, x_2, x_3,x_4,x_5)^T \mapsto \left(
\left(x_1,x_2,\frac{x_3}{\sqrt{3}},x_4,x_5\right)^T,D_X \right), \]
with
\[ D_X=
\left(
\begin{array}{lllll}
 0 & 0 & 0 & 0 & 0 \\
 \frac{x_1}{-3+\sqrt{3}} & 0 & 0 & 0 & 0 \\
 x_2 & \frac{x_1}{\sqrt{3}} & 0 & 0 & 0 \\
 \frac{1}{3} \left(1+\sqrt{3}\right) x_3 & x_2-\frac{x_2}{\sqrt{3}} & \frac{x_1}{\sqrt{3}} & 0 & 0 \\
 \frac{1}{6} \left(3+\sqrt{3}\right) x_4 & \frac{1}{3} \left(-1+\sqrt{3}\right) x_3 & -\frac{x_2}{\sqrt{3}} &
   \frac{1}{6} \left(-3+\sqrt{3}\right) x_1 & 0
\end{array}
\right).\]

\medskip
\noindent
We remark here that for most other cases, the representation is of a simpler form than
the one above and can be obtained by rather simple computations.
\end{proof}

\begin{proposition}\label{counterexample}
Consider the nilpotent Lie algebra $\lien$ of dimension $6$, with
basis $X_1,\ldots,X_{6}$ and non-trivial Lie brackets
\[
\begin{array}{lll}
{[X_1,X_2]}  = X_3, & [X_1,X_3] = X_4,  & [X_1,X_4]=X_5, \\
{[X_2,X_5]}  = X_6, & [X_3,X_4]=-X_{6}.  &
\end{array}
\]
and let  $N$ be the corresponding Lie group. Then $\R^6$ cannot act simply transitive
by NIL-affine actions on $N$.
\end{proposition}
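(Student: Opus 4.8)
The plan is to apply Theorem~\ref{algversion}. A simply transitive NIL-affine action of $\R^6$ on $N$ exists if and only if there is a Lie algebra homomorphism $d\rho:\R^6\to\lien\rtimes\Der(\lien)$, $X\mapsto(t_X,D_X)$, with $t$ a bijection and every $D_X$ nilpotent. Since $\R^6$ is abelian, such a homomorphism is just a choice of six pairwise commuting elements of $\lien\rtimes\Der(\lien)$, which amounts to the two identities $[D_X,D_Y]=0$ and $[t_X,t_Y]+D_Xt_Y-D_Yt_X=0$ for all $X,Y$. Transporting everything through the bijection $t$, I set $L_a:=D_{t^{-1}(a)}$ for $a\in\lien$; then $L:\lien\to\Der(\lien)$ is linear, each $L_a$ is a nilpotent derivation, $[L_a,L_b]=0$, and $L_ab-L_ba=[b,a]$ for all $a,b\in\lien$. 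So it suffices to show no such $L$ exists.

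Next I would determine $\Der(\lien)$ explicitly. As $\lien$ is generated by $X_1,X_2$, a derivation $D$ is pinned down by $DX_1$ and $DX_2$, and imposing compatibility with the defining relations cuts $\Der(\lien)$ down to dimension $9$. With respect to the basis $X_1,\dots,X_6$ every derivation is lower triangular, and its diagonal depends linearly on two scalar parameters; $D$ is nilpotent precisely when those two parameters vanish, which leaves a $7$-dimensional subalgebra $\mathfrak N$ of nilpotent derivations, each satisfying $D(\lien^i)\subseteq\lien^{i+1}$. Concretely, writing $DX_1=a_3X_3+a_4X_4+a_5X_5+a_6X_6$ and $DX_2=b_3X_3+b_5X_5+b_6X_6$ for $D\in\mathfrak N$, one gets $DX_3=b_3X_4-a_5X_6$, $DX_4=b_3X_5+a_4X_6$, $DX_5=-a_3X_6$ and $DX_6=0$; in particular every $D\in\mathfrak N$ kills $X_6$.

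The core of the argument is to feed the relation $L_ab-L_ba=[b,a]$ back into this description. Evaluating it on the pairs coming from $[X_1,X_3]=X_4$, $[X_1,X_4]=X_5$ and $[X_3,X_4]=-X_6$, and using the displayed shape of elements of $\mathfrak N$, one is left with a small linear system whose solution forces, among other vanishings, the ``$X_3\mapsto X_4$'' coefficient $b_3$ of $L_{X_1}$ to equal $\tfrac12$ and the ``$X_1\mapsto X_4$'' coefficient $a_4$ of $L_{X_3}$ to equal $-\tfrac12$. Plugging these values into the commutator $[L_{X_1},L_{X_3}]$ and evaluating at $X_1$ then gives $[L_{X_1},L_{X_3}](X_1)=-\tfrac14 X_5\neq 0$, contradicting $[L_{X_1},L_{X_3}]=0$. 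Hence no $L$ as above exists, and $\R^6$ cannot act simply transitively by NIL-affine transformations on $N$.

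The step I expect to be the real obstacle is the middle one: one must compute $\Der(\lien)$ and identify its nilpotent part precisely, and then organize the finitely many linear conditions produced by $L_ab-L_ba=[b,a]$ so that enough coefficients of $L_{X_1}$ and $L_{X_3}$ are determined \emph{before} the commutativity condition is invoked. Once the two key coefficients above are pinned down, the contradiction is a one-line computation. It may be possible to package the obstruction more conceptually in terms of the associated graded Lie algebra of $\lien$, but some version of this explicit computation seems unavoidable.
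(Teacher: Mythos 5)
Your proposal is correct and follows essentially the same route as the paper: reduce via Theorem~\ref{algversion} to a commuting family of nilpotent derivations paired with a bijective translational part, compute the (9-dimensional) derivation algebra of $\lien$ explicitly, extract the linear conditions from the pairs $(1,3)$, $(1,4)$, $(3,4)$, and contradict commutativity of the linear parts --- the only difference being that you exhibit $[L_{X_1},L_{X_3}](X_1)=-\tfrac14 X_5\neq 0$ where the paper shows $[D_1,D_2]\neq 0$. (Minor remark: with your stated convention $L_ab-L_ba=[b,a]$ the two key coefficients come out as $b_3=-\tfrac12$ and $a_4=+\tfrac12$ rather than the signs you wrote, but since only the product $a_4b_3=-\tfrac14$ enters the final commutator, the contradiction stands.)
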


\begin{proof}
We will assume that there exists a representation $\rho: \R^6
\rightarrow \Aff(N)$ which induces a simply transitive
NIL-affine action of $\R^6$ on $N$, and show that this leads to a
contradiction. By theorem \ref{algversion} we know that the differential
$d\rho:\R^6 \rightarrow \lien\rtimes {\rm Der}(\lien): X
\mapsto d\rho(X)=(t_X, D_X)$ is a Lie algebra morphism, for which
the translational part $t:\R^6 \rightarrow \lien : X \mapsto
t_X$ is a bijection and all $D_X$ are nilpotent.
As $t$ is a bijection, there exist $Y_i \in \R^6$ for which
$t_{Y_i}=X_i$. So we have $d\rho(Y_i)=(X_i,D_i)$, with $D_i \in {\rm
Der}(\lien)$. One can check (or consult \cite{magn95-1}, where $\lien$ equals the
Lie algebra $\lieg_{6,18}$) that the matrix of any
derivation $D_i, \; i=1,\ldots,6$, with respect to the basis
$X_1,\ldots, X_6$ is of the form:

\[ D_i=\left(
\begin{array}{llllll}
 \alpha_{i} & 0 & 0 & 0 & 0 & 0 \\
 0 & \beta_{i} & 0 & 0 & 0 & 0 \\
 \gamma_{i1} & \gamma_{i2} & \alpha_{i}+\beta_{i} & 0 & 0 & 0 \\
 \delta_{i} & 0 & \gamma_{i2} & 2 \alpha_{i}+\beta_{i} & 0 & 0 \\
 \epsilon_{i1} & \epsilon_{i2} & 0 & \gamma_{i2} & 3 \alpha_{i}+\beta_{i} & 0 \\
 \varphi_{i1} & \varphi_{i2} & -\epsilon_{i1} & \delta_{i} & -\gamma_{i1} & 3 \alpha_{i}+2 \beta_{i}
\end{array}
\right).\] As $d\rho$ is a Lie algebra morphism, we know that
$d\rho([Y_i,Y_j])=d\rho(0)=$
\begin{equation}\label{counter}
(0,0)=[(X_i,D_i),(X_j,D_j)]=( [X_i,X_j] +
D_i(X_j)-D_j(X_i),[D_i,D_j]).
\end{equation}
It follows that
\[ 0=  [X_i,X_j] +D_i(X_j)-D_j(X_i), \mbox{ for all }(i,j) \mbox{ with } 1\leq i < j \leq 6.\]
Considering the above for the pairs $(i,j)$ equal to $(1,3)$, $(1,4)$, $(2,3)$ and $(3,4)$
we obtain a system of linear equations in the entries of the $D_i$ implying that
\[
 \epsilon_{41}= \frac{1}{2},\;
\gamma_{12}= \frac{-1}{2},\; \delta_{3}=\frac{1}{2},\;
\alpha_3=\beta_3=\gamma_{32}=\epsilon_{31}=0.\]
From these conditions it follows that $[D_1,D_2]\neq 0$, which contradicts (\ref{counter}).
\end{proof}

\begin{remark}
We will later on, based on Theorem~\ref{thm-LR}, obtain a more conceptual proof of 
this result. In fact, such an action can only exist if the Lie algebra $\lien$ is two-step
solvable, see \cite{bdd07-2}, section $2$.
\end{remark}

\section{Abelian simply transitive groups and LR-structures}

Thus far we have been looking to simply transitive actions
$\rho:G\rightarrow \Aff(N)$, where both $G$ and $N$ are arbitrary real connected and
simply connected nilpotent Lie
groups. As already pointed out in the introduction, the case where $N\cong \R^n$ has been
well studied and is equivalent to the study of (complete) left symmetric structures on the
Lie algebra $\lieg$, corresponding to the Lie group $G$.

In this section, we deal with the case that $N$ is arbitrary and $G$ is abelian.
It turns out that this case is equivalent to the study of complete LR-structures (see below
for a definition) on the Lie algebra $\lien$.

In fact, the main result of this section is the following

\begin{theorem}\label{thm-LR}
Let $N$ be a connected and simply connected nilpotent Lie group of dimension $n$. 
Then there exists a simply transitive NIL-affine action of $\R^n$ on $N$ via a 
representation $\rho\colon \R^n \rightarrow \Aff (N)$ if and only if  the Lie 
algebra $\lien$ of $N$ admits a complete LR-structure.
\end{theorem}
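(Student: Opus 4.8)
The plan is to read the equivalence off Theorem~\ref{algversion} by unwinding the notion of a complete LR-structure. Recall (from the definition given above) that an LR-structure on $\lien$ is a bilinear product $(x,y)\mapsto x\cdot y$ on $\lien$ which is Lie compatible, $x\cdot y-y\cdot x=[x,y]$, and for which all left multiplications $L_x\colon y\mapsto x\cdot y$ mutually commute and all right multiplications $R_x\colon y\mapsto y\cdot x$ mutually commute; equivalently, a Lie compatible product with $[L_x,L_y]=0$ and each $L_x$ a derivation of $\lien$. It is \emph{complete} when in addition every $L_x$ is a nilpotent operator. The strategy is: identify $\R^n$ with $\lien$ via the bijective translational part $t$, and show that the abelianness of $d\rho(\R^n)$ together with the fact that each $D_X$ is a derivation is \emph{exactly} the package of LR-axioms, while the nilpotency of the $D_X$ is exactly completeness.

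First I would treat the direction from a simply transitive NIL-affine action to a complete LR-structure. Let $\rho\colon \R^n\to\Aff(N)$ induce such an action. Its differential is a Lie algebra homomorphism $d\rho\colon\R^n\to\lien\semi\Der(\lien)$, $X\mapsto(t_X,D_X)$, and by Theorem~\ref{algversion} the map $t\colon X\mapsto t_X$ is a bijection and each $D_X$ is nilpotent. Since $\R^n$ is abelian, $[d\rho(X),d\rho(Y)]=0$, which by the bracket formula on $\lien\semi\Der(\lien)$ is precisely
\[ D_X(t_Y)-D_Y(t_X)=-[t_X,t_Y] \quad\text{and}\quad [D_X,D_Y]=0 . \]
Transport to $\lien$: for $x,y\in\lien$ set $x\cdot y:=-D_{t^{-1}(x)}(y)$. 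Then $L_x=-D_{t^{-1}(x)}$ is a derivation of $\lien$ (a scalar multiple of an element of $\Der(\lien)$); the first identity becomes the Lie compatibility $x\cdot y-y\cdot x=[x,y]$; the second becomes $[L_x,L_y]=0$; and since each $L_x$ is a derivation, expanding $L_x([y,z])=[L_xy,z]+[y,L_xz]$ and substituting $[y,z]=y\cdot z-z\cdot y$ (using $[L_x,L_y]=0$) forces $(x\cdot y)\cdot z=(x\cdot z)\cdot y$, i.e.\ $[R_y,R_z]=0$. Hence $\cdot$ is an LR-structure, and it is complete because the $L_x=-D_{t^{-1}(x)}$ are nilpotent.

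For the converse, given a complete LR-structure $\cdot$ on $\lien$, define $d\rho$ on the underlying vector space of $\lien$ by $d\rho(x)=(x,-L_x)$, where $-L_x\in\Der(\lien)$ by the LR-axioms. A one-line check with the bracket formula gives $[d\rho(x),d\rho(y)]=\bigl([x,y]-(x\cdot y-y\cdot x),\,[L_x,L_y]\bigr)=(0,0)$, so $d\rho\colon\R^n\to\lien\semi\Der(\lien)$ is a Lie algebra homomorphism out of the abelian Lie algebra $\R^n$; its translational part is the identity (hence bijective) and its linear parts $-L_x$ are nilpotent by completeness. Theorem~\ref{algversion} then produces a simply transitive NIL-affine action $\rho\colon\R^n\to\Aff(N)$, completing the equivalence.

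I expect essentially all the substance to be carried by Theorem~\ref{algversion}; the only real point to get right is the bookkeeping that identifies ``$d\rho(\R^n)$ abelian and each $D_X$ a derivation'' with the LR-axioms and \emph{nothing more}. In particular one should note that the right-multiplication identity $[R_x,R_y]=0$ is automatic here (it follows from Lie compatibility, $[L_x,L_y]=0$, and $L_x$ being a derivation), which is exactly why the translation is clean and no extra hypothesis is needed. A minor additional point, should the definition of completeness be phrased via the $R_x$ instead of the $L_x$, is to recall that for an LR-structure on a nilpotent Lie algebra nilpotency of all $L_x$ is equivalent to nilpotency of all $R_x$; with that the two formulations of ``complete'' coincide and the statement is exactly as claimed.
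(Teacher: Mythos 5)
Your proposal is correct and follows essentially the same route as the paper: identify $\R^n$ with $\lien$ via the bijective translational part, set $X\cdot Y=-D_X(Y)$, read off Lie compatibility and $[L_X,L_Y]=0$ from the vanishing bracket, deduce $[R_X,R_Y]=0$ from the derivation property, and get completeness from nilpotency of the $D_X$. The only difference is that you write out the converse explicitly, which the paper dismisses with ``follows in a similar way.''
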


Before giving the proof let us define the notion of an LR-algebra and
an LR-structure.

\begin{definition}
An algebra $(A,\cdot)$ over a field $k$ with product $(X,Y) \mapsto X\cdot Y$
is called {\it LR-algebra}, if the product satisfies
the identities
\begin{align}
X\cdot (Y\cdot Z)& = Y\cdot (X\cdot Z) \label{lr1}\\
(X\cdot Y)\cdot Z& =(X\cdot Z)\cdot Y \label{lr2}
\end{align}
for all $X,Y,Z \in A$.
\end{definition}

If we denote by $L(X)$, $ R(X)$ the left, respectively right multiplication operator in
the algebra  $(A,\cdot)$, then the above conditions say that all left-multiplications
and all right multiplications commute:
\begin{align*}
[L(X),L(Y)] & = 0,\\
[R(X),R(Y)] & = 0.
\end{align*}

Note that LR-algebras are Lie-admissible algebras: the commutator defines
a Lie bracket.

The associated Lie algebra then is said to admit an LR-structure:

\begin{definition}\label{l-r}
An {\it LR-structure} on a Lie algebra
$\lieg$ over $k$ is an LR-product $\lieg \times \lieg \rightarrow \lieg$
satisfying
\begin{equation}\label{lr3}
[X,Y]=X\cdot Y -Y\cdot X
\end{equation}
for all $X,Y \in \lieg$. It is said to be {\it complete}, if all left
multiplications $L(X)$ are nilpotent.
\end{definition}

We now come to the proof of Theorem~\ref{thm-LR}.
\begin{proof}
Suppose that $\rho\colon \R^n\rightarrow \Aff (N)$
induces a simply transitive NIL-affine action of $\R^n$ on $N$.
Theorem~\ref{algversion} says that this is equivalent to the existence
of a Lie algebra homomorphism
\[d \rho: \R^n \rightarrow \lien \semi \Der(\lien): X \mapsto (t_X, D_X),
\]
where $t_X:\R^n\rightarrow \lien$ is a linear isomorphism and each $D_X$ is
nilpotent. As $t_X$ is bijective, we identify $\R^n$ and $\lien$ as vector spaces and hence
we can write $d\rho (X)=(X,D_X)$. The fact that $d\rho$ is a homomorphism from the abelian
Lie algebra to the Lie algebra $\lien \semi \Der(\lien)$ is equivalent to the 
requirement that for all $X,Y\in \lien$
\begin{align*}
0 & = [(X,D_X),(Y,D_Y)] \\
  & = ([X,Y] + D_X(Y)-D_Y(X),[D_X,D_Y]).
\end{align*}
Define a bilinear product on $\lien$ by $X\cdot Y= -D_X(Y)$.
We will show that this defines a complete LR-structure on $\lien$. In fact, the above
condition is equivalent to the conditions
\begin{align*}
[X,Y] & = X\cdot Y -Y\cdot X, \\
X\cdot (Y\cdot Z)  & = Y\cdot (X\cdot Z).
\end{align*}
We have to show that also the right multiplications commute. Since $D_X$ is
a derivation of $\lien$ we have
\begin{align*}
X\cdot (Z\cdot Y) -X\cdot (Y\cdot Z) & = X\cdot [Z,Y] \\
 & = - D_X([Y,Z]) \\
 & = -[D_X(Y),Z]-[Y,D_X(Z)] \\
 & = (X\cdot Y)\cdot Z -Z\cdot (X\cdot Y)+Y\cdot (X \cdot Z)-(X\cdot Z)\cdot Z
\end{align*}
for all $X,Y,Z\in \lien$. Since the left multiplications commute,
this is equivalent to $(X\cdot Y)\cdot Z=(X\cdot Z)\cdot Y$, showing
that also the right multiplications commute, and hence this bilinear  product
determines an LR-structure on $\lien$.

By Theorem~$\ref{algversion}$ we know that all derivations $D_X$, and hence all
left multiplications $L(X)$ are nilpotent. This proves that the LR-structure is
also complete and so we have obtained the first direction of our result.
The converse direction follows in a similar way.
\end{proof}

Theorem~\ref{thm-LR} shows that a deeper study of (complete) LR-algebras
is required in order to obtain a good understanding of simply transitive
abelian and NIL-affine actions on nilpotent Lie groups. The first steps
in this direction are taken in \cite{bdd07-2}.


\end{document}